\documentclass[english,twoside]{article}
\usepackage[T1]{fontenc}
\usepackage[latin9]{inputenc}
\usepackage{amsmath}
\usepackage{amsthm}
\usepackage{amssymb}
\usepackage{graphicx}
\usepackage{epstopdf}
\makeatletter

\providecommand{\tabularnewline}{\\}

\numberwithin{equation}{section}
\numberwithin{figure}{section}
\usepackage[natbibapa]{apacite}
\theoremstyle{plain}
\newtheorem{thm}{\protect\theoremname}
\theoremstyle{definition}
\newtheorem{defn}[thm]{\protect\definitionname}
\theoremstyle{definition}
\newtheorem{example}[thm]{\protect\examplename}
\theoremstyle{plain}
\newtheorem{lem}[thm]{\protect\lemmaname}

\usepackage{infiniteci}

\titletag{Submission to Journal of the Korean Statistical Society}
\makeatletter
\titlerunning{\@title}
\makeatother
\authorrunning{Jonas Moss}

\renewcommand{\sqrt}[1]{{(#1)^{1/2}}}

\DeclareMathOperator{\supp}{supp}

\usepackage{enumitem}
\setlist[enumerate]{label = (\roman*)}

\makeatother

\usepackage{babel}
\providecommand{\definitionname}{Definition}
\providecommand{\examplename}{Example}
\providecommand{\lemmaname}{Lemma}
\providecommand{\theoremname}{Theorem}

\begin{document}
\title{Infinite Diameter Confidence Sets in Hedges' Publication Bias Model}
\author{Jonas Moss \orcid{0000-0002-6876-6964}\\
Department of Data Science and Analytics \\
BI Norwegian Business School\emph{}\\
\emph{jonas.moss@bi.no}}
\maketitle
\begin{abstract}
Meta-analysis, the statistical analysis of results from separate studies,
is a fundamental building block of science. But the assumptions of
classical meta-analysis models are not satisfied whenever publication
bias is present, which causes inconsistent parameter estimates. Hedges'
selection function model takes publication bias into account, but
estimating and inferring with this model is tough {\color{black} for some datasets}. Using a generalized
Gleser--Hwang theorem, we show there is no confidence set of guaranteed
finite diameter for the parameters of Hedges' selection model. This
result provides a partial explanation for why inference with Hedges'
selection model is fraught with difficulties.
\end{abstract}
\keywords{meta-analysis \and confidence intervals \and file-drawer  problem \and publication bias \and selection models \and weight function models}

\section{Introduction}

A meta-analysis is a statistical analysis that quantitatively combines
results from separate scientific studies. When the studies measure
the same phenomenon, pooling of information allows us to predict the
common effect size with larger precision than we could have done with
one study alone. Meta-analyses are ubiquitous in the empirical sciences
and forms a key component of most systematic reviews such as Cochrane
reviews \citep{Higgins2019-vv}.

Most meta-analytic techniques assume honest and unbiased reporting
of results. But there is ample evidence that the scientific literature
is not unbiased, as studies with significant \textit{p}-values
tend to be published with a greater probability than other studies
\citep{Easterbrook1991-ph}, a phenomenon called publication bias
by \citet{sterling1959publication} and the file-drawer problem by
\citet{Rosenthal1979-pm}. When publication bias is present, there
is no reason to trust the results of meta-analytic methods that do
not account for it, as the parameter estimates will be inconsistent
\citep{Carter2019-rw}. 

Hedges' \citeyearpar{hedges1992modeling} publication bias model takes publication bias explicitly into account using a selection model, and is arguably the most appropriate model for publication bias \citep{Carter2019-rw}. Despite there being an $\mathtt{R}$ \citep{Team2013-tt} package for maximum likelihood estimation of this model, called $\texttt{weightr}$ \citep{Coburn2019-ec}, the model has not yet taken off. Its maximum likelihood estimation methods are numerically unstable and its estimates can be off even when they converge \citep{Coburn2019-ec,Stanley2005-ng}. The estimate of the mean effect size may be negative and of unrealistically large magnitude, and the estimated heterogeneity parameter might be improbably large. It turns out there are ridges in the likelihood function that can be linked to this behavior \citep{McShane2016-rb}, but it has not been stated in clear terms exactly what the consequences are for inferential procedures. The purpose of this note is to explain why Hedges' publication bias performs poorly, by showing there is no confidence set for the mean effect size that has infinite diameter with probability zero. 

\section{Hedges' publication bias model}

The most popular and well-known meta-analysis method is the random effects model with normal likelihoods \citep{hedges1998fixed}. Written in hierarchical notation, it equals
\begin{eqnarray*}
\theta_{i} & \sim & N(\theta_{0},\tau),\\
x_{i}\mid\theta_{i},\sigma_i & \sim & N(\theta_{i},\sigma_{i}).
\end{eqnarray*}
Here $x_{i}$ is the effect size and $\sigma_{i}$ is the standard deviation of the $i$th study,  $i=1,\ldots,N$. Following the convention in meta-analysis, we assume all $\sigma_{i}$s to be known. The mean parameter $\theta_0$ is the population effect size, $N(\theta_{0},\tau)$ is the effect size distribution, and $\tau$ is the heterogeneity parameter. The purpose of the effect size distribution is to model the fact that most effect size estimates plugged into a meta-analysis do not appear to measure the same phenomenon. By integrating out $\theta_{i}$, we find the density of $x_{i}$,
\[
f(x_{i};\theta_{0},\tau,\sigma_{i})=\phi(x_{i};\theta_{0},\sqrt{\tau^{2}+\sigma_{i}^{2}}),
\]
where $\phi$ is the density of a normal random variable. 

We will assume that the random effects meta-analysis model is true in the absence of publication bias. The mechanisms that cause publication bias modify the density in a suitable way. Consider the case when only significant studies at some specified level $\alpha$ are published. Assuming one-sided tests, the \textit{p}-values are $u_{i}=\Phi(-x_{i}/\sigma_{i})$, or normal one-sided \textit{p}-values. We will only deal with with one-sided \textit{p}-values in this paper, as there is usually, but not always, just one direction that is interesting to researchers, reviewers, and editors. A one-sided \textit{p}-value can also be used if the researchers reported a two-sided value, since $p=0.05$ for a two-sided hypothesis corresponds to $p=0.025$ for a one-sided hypothesis, et cetera.

Define $c_{\alpha}=\Phi^{-1}(1-\alpha)$, the cutoff for significance at level $\alpha$. The \emph{basic publication bias model} is a truncated normal model with density
\begin{equation}
f(x_{i};\theta_{0},\sigma_{i})=\Phi\left(\frac{\theta_{0}-c_{\alpha}}{\sqrt{\sigma_{i}^{2}+\tau^{2}}}\right)^{-1}\phi(x;\theta_{0},\sqrt{\sigma_{i}^{2}+\tau^{2}})1(x_{i}/\sigma_{i}>c_{\alpha}],\label{eq:selection for significance model}
\end{equation}
where $1[A]$ is the characteristic function of $A$. This model for publication bias was introduced by \citet{hedges1984estimation} in the context of $F$-distributions.

The basic publication bias model is unrealistic. It requires that no non-significant studies are published. But even in the fields most severely affected by publication bias, such as psychology, a non-negligible number of non-significant studies are published \citep{Motyl2017-dx}. Moreover, the basic publication bias model does not allow for different cutoffs for significance. It is likely that some editors will accept studies reaching a significance at $\alpha=0.025$, corresponding to $x_{i}/\sigma_{i}>1.96$ but not at $\alpha=0.05$, corresponding to $x_{i}/\sigma_{i}>1.64$.

These problems can be rectified by adopting the selection model for publication bias of \citet{iyengar1988selection}, which models the following scenario.
\begin{quote}
\textbf{Publication bias scenario.} Alice the editor receives a study with the \textit{p}-value $u$. Her publication decision is a random function of this \textit{p}-value. That is, she will publish the study with some probability $w(u)$ and reject it with probability $1-w(u)$. Every study you will ever read in Alice's journal has survived this selection mechanism, the rest are lost forever.
\end{quote}
Let $w(u_{i})$ be a function of the \textit{p}-value $u_{i}=\Phi(-x_{i}/\sigma_{i})$ taking values in $[0,1]$. Then $w(u_{i})$ is a probability for every $u_{i}$, and the selection model
\begin{equation}
f(x_{i};\theta_{0},\sqrt{\sigma_{i}^{2}+\tau^{2}})\propto\phi(x_{i};\theta_{0},\sqrt{\tau^{2}+\sigma_{i}^{2}})w(u)\label{eq:iyengar-greenhouse model}
\end{equation}
models the publication bias scenario exactly. This model can be viewed as a rejection sampling procedure \citep{flury1990acceptance,von1951various}, where $\phi$ serves as proposal distribution for $f$. Variants of this model, with and without covariates, has been studied by e.g. \citet{Dear1992-gw,Vevea1995-on,Vevea2005-xp,citkowicz2017parsimonious}.

\citet{hedges1992modeling} studies the selection model when $w$ is a step function with fixed steps. Let $\alpha$ be a vector with elements $0=\alpha_{0}<\alpha_{1}<\ldots<\alpha_{K}=1$ and $\rho$ be a $K$-ary non-negative, non-increasing vector having its first element equal to $\rho_{1}=1$ for identifiability. Define the step function $w$ based on $\alpha$ and $\rho$ as
\begin{equation}
w(u;\rho,\alpha)=\sum_{k=1}^{K}\rho^{k}1_{(\alpha_{k-1},\alpha_{k}]}(u).\label{eq:step function}
\end{equation}
We call the selection model with a step function \emph{Hedges' publication bias model}. Its density is
\begin{equation}
f(x_{i};\theta_{0},\sqrt{\tau^{2}+\sigma_{i}^{2}})\propto\sum_{k=1}^{K}\rho^{k}\phi(x_{i};\theta_{0},\sqrt{\tau^{2}+\sigma_{i}^{2}})1{(\alpha_{k-1},\alpha_{k}]}(u_{i}).\label{eq:hedges model}
\end{equation}
Interpreting Hedges' publication bias model is easy. When the editor receives a study with \textit{p}-value $u$, she finds the $k$ such that $u\in(\alpha_{k-1},\alpha_{k}]$ and accepts with probability $\rho^{k}$. Since $\rho^{1}=1$, she always accepts when $u\in[0,\alpha_{1}]$. The vector $\rho$ is non-increasing since a publication decision based solely on \textit{p}-values should always act favorably towards lower \textit{p}-values. The parameters $(\theta_{0},\tau,\rho)$ of the model are identified when $\alpha$ is fixed \citep[Web Appendix A]{moss2019modelling}. 

{\color{black}It is probably not possible to generalize the results of this paper to selection models that do no follow the step function model. Lemma \ref{lem:generalized shifted exponential}, about the truncated normal, is crucial in the proof of our main result, Theorem \ref{thm:general publication bias}. Truncated densities only appear in step function models, not models with continuous selection functions, such as the Probit selection function of \cite{Copas2013-qi} or the one-parameter selection functions of \cite{Preston2004-pv}.}

Hedges' publication bias model allows both for non-significant studies to be published and allows the editor to act differently towards different cutoffs such as $\alpha=0.025$ and $\alpha=0.05$. In addition, the model can approximate any non-increasing selection function $w$ by increasing the number of steps. {\color{black}In applications, the parameters $\mu$, $\tau$, and $\rho$ are estimated from the data, while $\alpha$ is fixed by the researcher, for instance at $\alpha = (0.025, 0.05, 1)$. We recommend using these cutoffs, as it is well known that applied journals frequently demand statistical significance at this level. Since both two-sided and one-sided \emph{p}-values occur, we need to include $0.025$ in addition to $0.05$.}

We can write Hedges' model as a mixture model on the form
\begin{equation}
f(x_{i};\theta_{0},\sqrt{\tau^{2}+\sigma_{i}^{2}})=\sum_{k=1}^{K}\pi^{k}f^{k}(x_{i};\theta_{0},\sqrt{\tau^{2}+\sigma_{i}^{2}}).\label{eq:mixture model formulation}
\end{equation}
Where $f^{k},k\leq K$ are normal densities
truncated to $(\Phi^{-1}(1-\alpha_{k-1}),\Phi^{-1}(1-\alpha_k)]$, and $\pi^{k}$ are mixture probabilities, i.e., $\pi^{k}>0$ for each $k$ and $\sum_{k=1}^{K}\pi^{k}=1$. The mixture probabilities $\pi^{k}$ are functions of $(\theta_{0},\tau,\sigma_{i},\rho)$, see the appendix (p. \pageref{eq:pi_i formula}) for their formula.

The main benefit of Hedges' publication bias model (\ref{eq:hedges model}) is how it models \textit{p}-values based publication bias directly, there is no approximation involved. If you believe in the random effects meta-analysis model and the \textit{p}-value based publication bias scenario, Hedges' publication bias model is simply the correct model. Most statistical methods correcting for publication bias in the literature either do not make use of an explicit statistical model or do not estimate the parameters $\theta_{0}$ and $\tau$. For instance, the funnel plot of \citet{Egger1997-rd} is a graphical method, while the trim-and-fill method of \citet{Duval2000-ct} is a non-parametric method based on making the funnel plot symmetric. \citet{Stanley2005-ng,stanley2014meta} discuss various misspecified regression-based estimators of the corrected effect size $\theta$ based on the fixed effect Hedges' publication bias model. The estimating \textit{p}-curve method of \citet{Simonsohn2014-cn} and \textit{p}-uniform of \citet{Van_Assen2015-qs,Van_Aert2016-cu} are two methods for dealing with publication bias hailing from psychology. Both are based on a variant of the basic publication bias model, but with fixed instead of random effects, and both employ somewhat unusual estimation methods \citep{McShane2016-rb}. Since there is ample evidence of heterogeneity in meta-analysis, restricting oneself to the fixed effects meta-analysis is a mistake.

Hedges' model has some downsides. It models only bias due to selection
of \textit{p}-values, not every source of bias, such as language bias
\citep{Egger1998-kj}. Second, it may not be best model for biases
with other causes than the publication process, such as \textit{p}-hacking
\citep{simmons2011false}. \citet{moss2019modelling} propose a related
model that may be more successful at correcting for \textit{p}-hacking. 

{\color{black} Hedges' model can be hard to estimate, especially if the data is unfavourable}. \citet[Section 6.3]{Stanley2005-ng}
discusses three problematic cases in economics when maximum likelihood
was used to estimate Hedges' publication bias model. \citet[Appendix A]{McShane2016-rb}
notes that while estimation of the basic publication bias model is
hard, introducing the heterogeneity parameter exacerbates the problem.
The likelihood function has contours following approximately $\tau\propto|\theta|^{1/2}$.
\begin{figure}[t]
\noindent \begin{centering}
\includegraphics[scale = 0.5]{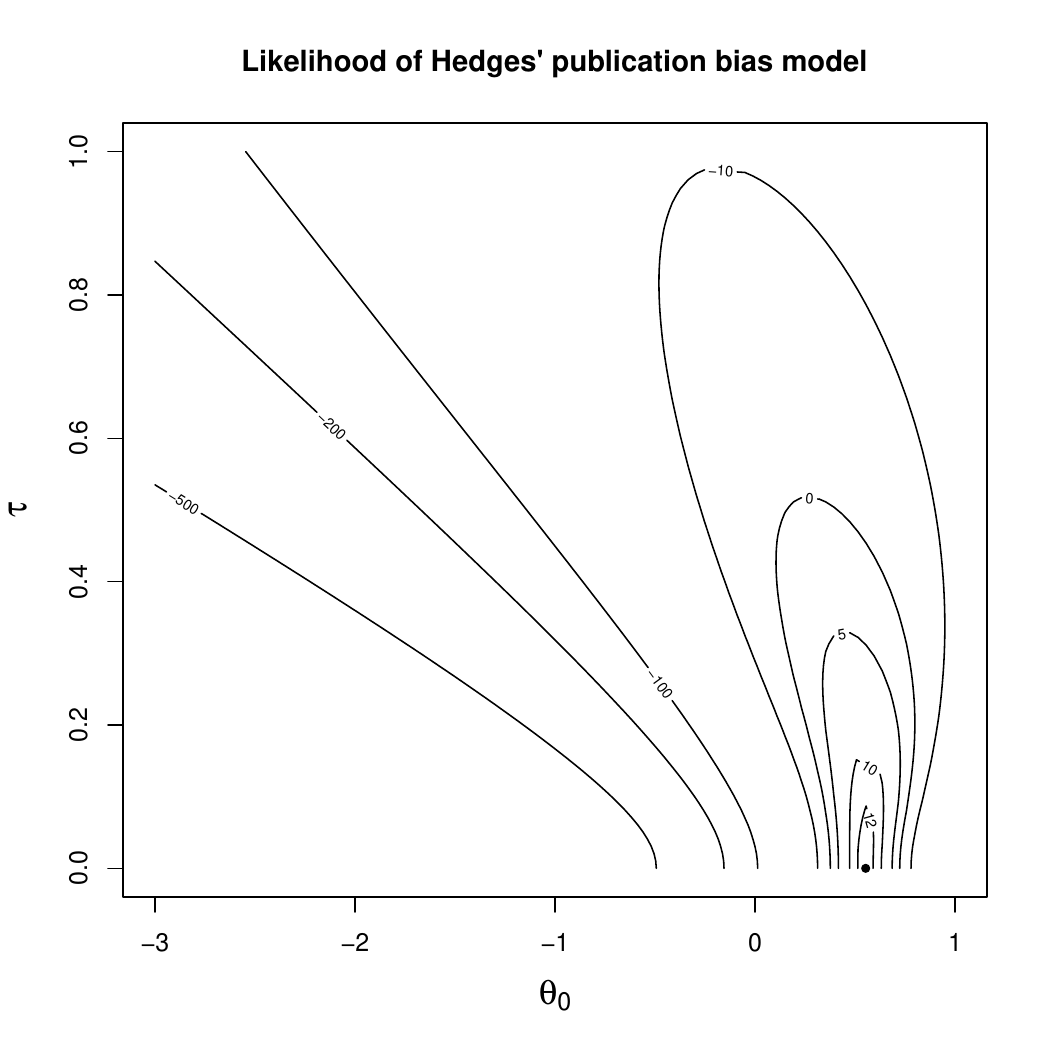}
\par\end{centering}
\caption{\label{fig:contour lines}Contour lines for the log-likelihood for
the simple publication model using power posing data of \citet{Cuddy2018-kp}. }
\end{figure}
Figure \ref{fig:contour lines} shows the contour lines for the meta-analysis of \citet{Cuddy2018-kp} where the selection probabilities of the step function (\ref{eq:step function}) are fixed at $\rho=(1,0.6,0.1)$ and $\alpha = (0, 0.025, 0.05, 1)$. Only just around the maximum at $\hat{\theta}_{0}=0.55$ and $\hat{\sigma}=5\cdot10^{-7}$
can the likelihood be approximated with a quadratic function.

{\color{black} Estimation of Hedges' model is especially hard when almost all observations lie close to the the cutoffs. In our experience, estimation works well when the data is sufficiently well spread, for instance when some observations are far away from the cutoffs and there are observations that have failed to reach significance. As a simple example of a case when estimation fails to work, consider the observation vector $x = (1.96, 1.96, 1.96, 1.96, 1.96, 1.64, 1.64, 1.64, 1.64, 1.64, -1)$ when the standard deviation of each observation is $\sigma_i=1$. We employ $\alpha = (0, 0.025, 0.05, 1)$, which implies cutoffs at $\Phi^{-1}(0.975)$ and $\Phi^{-1}(0.95)$. In this case, $10$ out of $11$ observations are very close to the cutoffs. When we run the Hedges selection model on this data, using the $\mathtt{sel}$ function of the $\mathtt{R}$ package $\mathtt{metafor}$ \citet{Viechtbauer2010-cf}, the model does not converge. If the last observation is changed to $0$ instead of $-1$, the model converges, but the Hessian cannot be inverted, and the parameter estimate for $\theta_0$ equals $-1.8$. Situations similar to this one are most common when $n$ is small.}

\section{Confidence sets of infinite diameter}

Fix some measurable space $(\Omega,\mathcal{F})$, let $\mathcal{P}$
be a family of dominated probability measures defined on this measurable
space, and $\Pi$ a partition of $\mathcal{P}$. Recall that a partition
of $\mathcal{P}$ is a collection of disjoint non-empty subsets $\pi$
of $\mathcal{P}$ such that $\bigcup_{\pi\in\Pi}\pi=\mathcal{P}$.
When $p$ is a density associated with a $P\in\mathcal{P}$, we will
use the standard notation $[p]$ to denote the unique part $\pi$ containing
$P$. {\color{black} Instead of partitions, we could have used a formulation with main parameters $\theta$ and nuisance parameters $\eta$, and defined the rejection set for $\theta$ as $\sup_{\eta}\sup_{\theta}P_{\theta,\eta}(R(\theta))\leq\alpha$. We have decided to use partitions for two reasons: First, there are no unambiguous nuisance and main parameters in our applications, making notation using nuisance parameters confusing. Second, the upcoming Theorem \ref{thm:Gleser--Hwang} can be applied in purely non-parametric situations, where the mention of nuisance and main parameters is even more confusing.} 
\begin{defn}
\label{def:confidence set} A \emph{confidence set} of level $\alpha$
is a family of rejection sets $\{R(\pi)\},\pi\in\Pi$ such that
\[
\sup_{\pi\in\Pi}\sup_{P\in\pi}P(R(\pi))\leq\alpha.
\]
If the inequality is an equality, the confidence set has \emph{size}
$\alpha$. 
\end{defn}

This definition of confidence sets might look slightly unfamiliar,
but it is a straight-forward generalization of the definitions in
\citet[Definition 9.1.5]{Casella2002-lg} and \citet[Section 3.5]{lehmann2006testing}.
Usually, a confidence is defined as a set $C$ adhering to the relation
\begin{equation}
\pi_{0}\in C\iff\omega\notin R(\pi_{0}).\label{eq:c-confidence set}
\end{equation}
That is, $\pi_{0}\in C$ if and only if we accept the null-hypothesis
$H_{0}:\pi=\pi_{0}$, but we will only need the formulation using rejection
sets in this paper. When confidence sets are defined in terms of rejection
sets, there is sometimes no partition $\Pi$ to take the supremum
over, and the definition reduces to $\sup_{P\in\mathcal{P}}P(R(P))\leq\alpha$.
The term\emph{ confidence interval} is far more common than confidence
set, but this requires that the $C$ in equation \ref{eq:c-confidence set}
is an interval, which we will not require here.

The following example should make Definition \ref{def:confidence set}
clearer. 
\begin{example}
\label{exa:t-test} Consider the usual \emph{t}-confidence interval
with $n$ observations. In this case, $\mathcal{P}$ contains all measures
$P_{\mu,\sigma}^{n}$, where $P_{\mu,\sigma}$ is the probability
measure of a normal with mean $\mu$ and standard deviation $\sigma$.
Here $Q^{n}$ denotes the $n$-fold product measure of $Q$, corresponding to $n$ independent samples from $Q$ when $Q$ is a probability measure.  {\color{black} The \emph{t}-confidence interval is an exact confidence interval for $\mu$ no matter what $\sigma>0$, the nuisance parameter, is. Since the test is exact, rejection sets $R(\mu)$ satisfy $P^n_{\mu,\sigma}(R(\mu))=\alpha$ for all $\sigma,\mu$.} We can formulate the confidence set in terms of partitions too. Let $\pi(\mu)=\{P_{\mu,\sigma}^{n}\mid\sigma>0\}$
contain all normal probability measures with mean $\mu$ and some
positive standard deviation. Then $\{\pi(\mu)\},\mu\in\mathbb{R}$
defines a partition of $\mathcal{P}$. Then the two-sided \emph{t}-confidence
interval is a confidence set of size $\alpha$ with partition {\color{black}$\Pi = \{\pi(\mu)\},\mu\in\mathbb{R}$}
according to Definition \ref{def:confidence set}.
\end{example}

{\color{black} Now we must find out how to measure the size of confidence sets. To make our results as general as possible, we will let the \emph{size function} be any non-negative function $\left\Vert \cdot\right\Vert :\Pi\to[0,\infty)$. In most cases, the size function will be a norm, but any non-negative function is a valid size function. For instance, in the \emph{t}-confidence interval example above, $\left\Vert \cdot\right\Vert $
can be taken to be $\left\Vert \pi\right\Vert =|\mu|$ for the unique
$\mu$ associated with each $\pi$. }

The diameter of a confidence set is the random variable
\begin{equation}
D(\omega)=\sup_{\pi\in\Pi}\{\left\Vert \pi\right\Vert \mid\omega\notin R(\pi)\}.\label{eq:diameter}
\end{equation}
The diameter tells you the size of the largest accepted $\pi$.
We will assume that $D$ is Borel measurable.
\begin{defn}
\label{def:infinite diameter}A confidence set has infinite diameter
with $P$-positive probability if $P(D=\infty)>0$. It has infinite
diameter with positive probability if $P(D=\infty)>0$ for all $P\in\mathcal{P}$.
\end{defn}

{\color{black} The original Gleser--Hwang theorem \citep[Theorem 1]{gleser1987nonexistence} is defined for pairs of parameters $\theta_1,\theta_2$, where $\theta_2$ is a nuisance parameter and the confidence set is constructed for a functional $\gamma(\theta_1)$. The following generalization does not require any nuisance parameters. Using partitions, it can be used both with and without nuisance parameters, as well as in non-parametric settings.} Its proof is in the appendix (p. \pageref{proof:Gleser--Hwang}). 
\begin{thm}[Gleser--Hwang theorem]
\label{thm:Gleser--Hwang} Suppose there is a sequence $\{p_{n}\}$
of densities derived from $\mathcal{P}$ satisfying the following: 
\begin{enumerate}
\item There is a density $p^{\star}$ such that $p_{n}$ converges to $p^{\star}$
pointwise,
\item $\supp p\supseteq\supp p^{\star}$ for all densities $p$ derived
from $\mathcal{P}$,
\item the size of the equivalence class $[p_{n}]$ goes to infinity as $n$
increases, $\left\Vert [p_{n}]\right\Vert \to\infty$.
\end{enumerate}
Then every confidence set with level $\alpha>0$ has infinite diameter
with positive probability.
\end{thm}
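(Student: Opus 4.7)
The plan is to argue by contradiction. I would fix an arbitrary $P\in\mathcal{P}$ with density $p$, suppose $P(D=\infty)=0$, and then clash with the coverage requirement along the sequence of measures $P_n$ with densities $p_n$.

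The first step would be to translate finite diameter into an eventual acceptance statement. Since $D(\omega)=\sup\{\|\pi\|\mid \omega\notin R(\pi)\}$ being finite at $\omega$ bounds $\|\pi\|$ over all parts with $\omega\notin R(\pi)$, and since $\|[p_n]\|\to\infty$, any such $\omega$ must satisfy $\omega\in R([p_n])$ for all sufficiently large $n$. Thus $\{D<\infty\}\subseteq\liminf_n R([p_n])$, which gives $P(\liminf_n R([p_n]))=1$.

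Next I would transfer this almost-sure statement to the limit measure $P^\star$ associated with $p^\star$. The support hypothesis $\supp p\supseteq\supp p^\star$ is equivalent to $P^\star\ll P$, so $P^\star(\liminf_n R([p_n]))=1$ too. Applying Fatou's lemma to the bounded indicators $\mathbf{1}_{R([p_n])}$ then yields $\liminf_n P^\star(R([p_n]))\geq 1$, hence $P^\star(R([p_n]))\to 1$.

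To finish I would pass the convergence from $P^\star$ back to $P_n$. Since $p_n\to p^\star$ pointwise and both integrate to $1$, Scheffé's lemma gives $\int|p_n-p^\star|\,d\mu\to 0$, so $|P_n(A)-P^\star(A)|\to 0$ uniformly over measurable $A$, and in particular $P_n(R([p_n]))\to 1$. Because $P_n\in[p_n]$, this contradicts the coverage bound $P_n(R([p_n]))\leq\alpha<1$, completing the proof for the chosen $P$ (and hence for every $P\in\mathcal{P}$, matching Definition \ref{def:infinite diameter}).

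The main hurdle is organizational rather than deep: one must verify that the support hypothesis really does give $P^\star\ll P$ and then chain Fatou's lemma (to get $P^\star(R([p_n]))\to 1$) with Scheffé's lemma (to move from $P^\star$ to $P_n$). The generalization from the original Euclidean Gleser--Hwang statement to arbitrary partitions is essentially a relabelling, since the argument never uses any structure of the parameter space beyond the abstract size function $\|\cdot\|$.
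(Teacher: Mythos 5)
Your argument is correct, but it is organized differently from the paper's proof, which is direct rather than by contradiction. The paper fixes the complements $R^{c}([p_{n}])$, notes $R^{c}([p_{n}])\subseteq\{D\geq n\}$ once $\|[p_{n}]\|\geq n$, and passes to the limit in $\int_{D\geq n}p_{n}\,d\mu$ using the sandwich convergence result (Lemma \ref{lem:Dominated covergence theorem}) with $0\leq 1_{\{D\geq n\}}p_{n}\leq p_{n}$; this yields the quantitative bound $\int_{D=\infty}p^{\star}d\mu\geq 1-\alpha$, after which the support condition transfers strict positivity of $P(D=\infty)$ to every $P\in\mathcal{P}$ by writing $p=(p/p^{\star})p^{\star}$ on $\supp p^{\star}$. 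You instead assume $P(D=\infty)=0$ for some $P$, convert $\|[p_{n}]\|\to\infty$ into the eventual-rejection inclusion $\{D<\infty\}\subseteq\liminf_{n}R([p_{n}])$, move this to $P^{\star}$ via $P^{\star}\ll P$ (which is indeed equivalent to $\supp p\supseteq\supp p^{\star}$ under the positivity-set reading of support that the paper itself uses when it divides by $p^{\star}$), apply Fatou for sets to get $P^{\star}(R([p_{n}]))\to 1$, and then use Scheff\'e's lemma to obtain total-variation convergence $P_{n}\to P^{\star}$, contradicting $P_{n}(R([p_{n}]))\leq\alpha$. The two routes use the hypotheses in the same roles, but your machinery (Scheff\'e plus Fatou) replaces the paper's sandwich lemma and neatly sidesteps the moving sets $\{D\geq n\}$, at the cost of losing the explicit bound $P^{\star}(D=\infty)\geq 1-\alpha$ that the direct argument produces; both proofs, yours explicitly and the paper's implicitly through $0<1-\alpha$, require $\alpha<1$ in addition to the stated $\alpha>0$.
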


Following the terminology of \citet{berger1999integrated}, we will
say that families $\mathcal{P}$ of probabilities satisfying the conclusion
of Theorem \ref{thm:Gleser--Hwang} for a suitable partition $\Pi$
belong to the Gleser--Hwang class. To make the Gleser--Hwang class
more familiar, we will present two examples. More examples can be
found in the papers of \citet{gleser1987nonexistence} and \citet{berger1999integrated}.

Fieller's problem is the best known case of a badly behaved confidence
set. Let $(X,Y)$ be an observation from a bivariate normal $N([\mu_{1},\mu_{2}],I\sigma^{2})$,
where $\sigma^{2}$ is known. We want to form a confidence set for
the ratio $\mu_{2}/\mu_{1}$. The most famous confidence set is due
to \citet{Fieller1940-lg}. His confidence set can be finite, the
whole real line, or the union of two disjoint semi-infinite intervals, all with positive probability \citep{Koschat1987-dk}.

But it is not only Fieller's confidence set that might be infinitely
long. The Gleser--Hwang theorem can be used to show that \textit{every} confidence
set for $E(Y)/E(X)$ must be infinitely long with positive probability.
This result is almost independent of the distribution of $X$ and
$Y$. To state this result in our notation, let $\mathcal{P}$ be
a family of bivariate distributions over $(X,Y)$. All of these distributions
have the same support, and all of them have finite means $E_{p}(X)$
and $E_{p}(Y)$. Moreover, assume $E_{p}(X)=0$ is attainable for
some $p\in\mathcal{P}$. Define the partition $\Pi$ by $p,q\in\pi$
if and only if $E_{p}(X)/E_{p}(Y)=E_{q}(X)/E_{q}(Y)$, and let $\left\Vert [p]\right\Vert =|E_{q}(X)/E_{q}(Y)|$,
i.e., the ratio of means. Choose a sequence $p_{n}(x,y)=p(x,y)$,
where $p(x,y)$ is density with means $E_{p}X>0$ and $E_{p}Y=0$.
Then $\left\Vert [p]\right\Vert =\infty$, the conditions of Theorem
\ref{thm:Gleser--Hwang} are satisfied, and every confidence set with
level $\alpha>0$ has infinite diameter with positive probability.

Another example is due to \citet{bahadur1956nonexistence}, who studies non-parametric testing of
the mean, and concludes the mean cannot be meaningfully tested. They
are working with a family $\mathcal{P}$ of densities over $\mathbb{R}$
that covers all finite means, has finite variances, and is closed
under convex combinations. Similar problems were considered by \citet{romano2004non}
and \citet{Donoho1988-hg}.

Using the Gleser--Hwang theorem, it is easy to verify that every confidence
set has infinite diameter with positive probability. Define the partition
$\Pi$ by $p,q\in\pi$ if and only if $E_{p}(X)=E_{q}(X)$, and let
$\left\Vert [p]\right\Vert =|E_{p}(X)|$. Let
\[
p_{n}(x)=\left(1-\frac{1}{n}\right)q_{0}(x)+\frac{1}{n}q_{n^{2}}(x),
\]
where $q_{0}$ has mean $0$ and $q_{n^{2}}$ has mean $n^{2}$. Then
$\left\Vert [p_{n}]\right\Vert =n$, the conditions of Theorem \ref{thm:Gleser--Hwang}
are satisfied, and every confidence set with level $\alpha>0$ has
infinite diameter with positive probability.

There are several natural candidates for $\Pi$ when working with
Hedges' selection function model (\ref{eq:hedges model}). We will
work with three of them. First, consider the partition where $p,q\in\pi$
if and only if $p,q$ have the same mean effect size parameter $\theta_{0}$.
We will equip this partition with the size function $\left\Vert \pi\right\Vert =|\theta_{0}|$,
and it corresponds to a confidence set for $\theta_{0}$. Second,
consider the partition where all $p,q\in\pi$ have the same heterogeneity
parameter $\tau$, equipped with $\left\Vert \pi\right\Vert =\tau$.
Finally, we will work with the partition where all $p,q\in\pi$ have
the same heterogeneity parameter $\tau$ and population effect size
$\theta_{0}$, and equip it with $\left\Vert \pi\right\Vert =\sqrt{\theta_{0}^{2}+\tau^{2}}$.
This information is summarized in Table \ref{tab:Possible partitions}
for convenience. 

\begin{table}
\noindent \centering{}\caption{\label{tab:Possible partitions}The three partitions $\Pi$ for the
selection function model}
{\small{}}%
\begin{tabular}{llll}
 & {\small{}Symbol} & {\small{}Size $\left\Vert \cdot\right\Vert $} & {\small{}Confidence set}\tabularnewline
\hline 
{\small{}Mean effect size} & {\small{}$\theta_{0}$} & {\small{}$\left\Vert \pi\right\Vert =|\theta_{0}|$} & {\small{}Confidence set for $\theta_{0}$}\tabularnewline
{\small{}Heterogeneity parameter} & {\small{}$\tau$} & {\small{}$\left\Vert \pi\right\Vert =\tau$} & {\small{}Confidence set for $\tau$}\tabularnewline
Both parameters & {\small{}$(\theta_{0},\tau)$} & {\small{}$\left\Vert \pi\right\Vert =\sqrt{\theta_{0}^{2}+\tau^{2}}$} & {\small{}Joint confidence set for $(\theta,\tau)$}\tabularnewline
\end{tabular}
\end{table}

Let us take a look at the basic publication bias model (\ref{eq:selection for significance model})
again. To use Theorem \ref{thm:Gleser--Hwang} we need a witnessing
sequence of functions $p_{n}\to p$ satisfying the conditions $\text{(ii)}$
and $\text{(iii)}$. The next lemma shows how to make such a witness
for the truncated normal. Its proof is in the appendix (p. \pageref{proof:generalized shifted exponential}).
\begin{lem}
\label{lem:generalized shifted exponential} Let $f_{n}$ be a normal
density truncated to $[a,b)$, where $b=\infty$ is allowed, with
underlying mean $\theta_{n}=-n$ and standard deviation $\sigma_{n}^{2}=n+c$
for some $c\in\mathbb{R}$. Then $f_{n}$ converges pointwise to $\exp(-x)/[\exp(-a)-\exp(-b)]$,
the distribution of an exponential variable truncated to $[a,b)$.
\end{lem}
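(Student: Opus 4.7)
The plan is to compute the pointwise limit directly by writing $f_n(x)$ as the ratio of the raw Gaussian density $\phi(x;-n,\sqrt{n+c})$ to the truncation normalizer $\Phi((b+n)/\sqrt{n+c}) - \Phi((a+n)/\sqrt{n+c})$ on $[a,b)$, and then examining the asymptotics of each factor as $n \to \infty$. The $x$-dependence of the limit should come entirely from the quadratic exponent; everything else must cancel between numerator and denominator, since the limit is a probability density that does not depend on $n$ or $c$.

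The key calculation is the algebraic expansion, for fixed $y$,
\[
\frac{(y+n)^2}{2(n+c)} = \frac{n}{2} + y - \frac{c}{2} + O(1/n),
\]
obtained by expanding the numerator and factoring $n/(n+c) = 1 - c/n + O(1/n^2)$. Applied to $y=x$, this gives
\[
\exp\!\left(-\frac{(x+n)^2}{2(n+c)}\right) = e^{-n/2 + c/2}\, e^{-x}(1+o(1)),
\]
so the Gaussian numerator of $f_n$ behaves like $(2\pi n)^{-1/2} e^{-n/2+c/2} e^{-x}$. For the denominator I would use Mill's ratio $1 - \Phi(z) \sim \phi(z)/z$ as $z \to \infty$ applied to $z_a = (a+n)/\sqrt{n+c}$ and $z_b = (b+n)/\sqrt{n+c}$, both of which are of order $\sqrt{n}$, with $\Phi(\infty)=1$ in the $b=\infty$ case. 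Reapplying the same expansion with $y=a$ and $y=b$ then yields
\[
\Phi(z_b) - \Phi(z_a) \sim \frac{1}{\sqrt{2\pi n}}\, e^{-n/2 + c/2}\,(e^{-a} - e^{-b}),
\]
under the convention $e^{-\infty}=0$.

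Taking the ratio cancels the common factors $(2\pi n)^{-1/2}$ and $e^{-n/2+c/2}$, leaving exactly $e^{-x}/(e^{-a}-e^{-b})$ on $[a,b)$, which is the claimed limit. The only actual bookkeeping is identifying the constant term $y - c/2$ correctly in the expansion of $(y+n)^2/(2(n+c))$; the apparently diverging factor $e^{-n/2+c/2}$ in the numerator is balanced by exactly the same factor produced by Mill's ratio in the denominator, so no uniformity or dominated-convergence argument is needed. Since only pointwise convergence for fixed $x$ is required, the $O(1/n)$ remainders in the expansion are harmless.
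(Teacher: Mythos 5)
Your proposal is correct and follows essentially the same route as the paper's proof: write $f_n$ as the Gaussian numerator over the truncation normalizer, expand the quadratic exponent to extract the common factor $e^{-n/2+c/2}$ times $e^{-y}$, and apply Mills' ratio $1-\Phi(z)\sim\phi(z)/z$ to the normalizer so that everything but $e^{-x}/(e^{-a}-e^{-b})$ cancels. Your explicit bookkeeping $(y+n)^2/(2(n+c)) = n/2 + y - c/2 + O(1/n)$ is a slightly tidier version of the paper's chain of approximations, but it is the same argument.
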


Using Lemma \ref{lem:generalized shifted exponential} it is not hard
to show that the basic publication bias model (\ref{eq:selection for significance model})
is a member of the Gleser--Hwang class.
\begin{thm}
\label{thm:simple publication bias} Assume we have $N$ independent samples
from the basic publication bias model (\ref{eq:selection for significance model}). Then any confidence set for $\theta_{0},\tau$, or
$(\theta_{0},\tau)$ with level $\alpha>0$ will have infinite diameter
with positive probability. 
\end{thm}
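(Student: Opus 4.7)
The plan is to apply the Gleser--Hwang theorem (Theorem~\ref{thm:Gleser--Hwang}) by exhibiting a witnessing sequence of joint densities for $(x_1,\ldots,x_N)$ that converges pointwise to an honest density, has the correct support, and whose size diverges under each of the three partitions in Table~\ref{tab:Possible partitions}. The key idea is to send the underlying mean off to $-\infty$ while inflating the heterogeneity variance in step with it, so that each individual truncated normal factor collapses to a shifted exponential via Lemma~\ref{lem:generalized shifted exponential}; the same index $n$ must drive all $N$ factors at once.

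Concretely, I would set $\theta_n = -n$ and $\tau_n^2 = n$. For each $i$, the marginal density is a normal with underlying mean $-n$ and variance $\sigma_i^2 + n$, truncated to $[c_\alpha \sigma_i, \infty)$. This is exactly the setup of Lemma~\ref{lem:generalized shifted exponential} with $a = c_\alpha \sigma_i$, $b = \infty$, and constant $c_i = \sigma_i^2$. Hence each factor converges pointwise to $e^{-(x_i - c_\alpha \sigma_i)} \mathbf{1}[x_i \ge c_\alpha \sigma_i]$, and by independence the joint density $p_n = \prod_i f_n$ converges pointwise to the product $p^\star$ of these shifted exponentials. This verifies condition (i) of Theorem~\ref{thm:Gleser--Hwang}.

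For condition (ii), every density in $\mathcal{P}$ has support $\prod_i [c_\alpha \sigma_i, \infty)$, which is also the support of $p^\star$, so the support inclusion holds trivially. For condition (iii), I would separately check each partition: under $\|\pi\| = |\theta_0|$ the size $\|[p_n]\| = n$; under $\|\pi\| = \tau$ the size $\|[p_n]\| = \sqrt{n}$; and under $\|\pi\| = \sqrt{\theta_0^2 + \tau^2}$ the size $\|[p_n]\| = \sqrt{n^2 + n}$. All diverge to infinity, so Theorem~\ref{thm:Gleser--Hwang} applies in each case, yielding the claim.

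The only delicate point — and it is a mild one — is the coordination across observations with distinct known standard deviations $\sigma_i$. A single sequence $(\theta_n, \tau_n)$ must simultaneously satisfy the hypotheses of Lemma~\ref{lem:generalized shifted exponential} for every $i$. The choice $\tau_n^2 = n$ works because the free constant $c$ in Lemma~\ref{lem:generalized shifted exponential} can be taken to depend on $i$, absorbing the offset $\sigma_i^2$ without disturbing the single sequence $\theta_n = -n$ that drives each factor to the exponential limit.
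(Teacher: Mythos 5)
Your proposal is correct and follows essentially the same route as the paper's proof: the same witnessing sequence $\theta_n=-n$, $\tau_n^2=n$, reduction of each factor to Lemma \ref{lem:generalized shifted exponential} (with $a=c_\alpha\sigma_i$, $b=\infty$, $c=\sigma_i^2$), and verification of the three Gleser--Hwang conditions. If anything, you spell out details the paper leaves implicit, namely the per-observation constant absorbing $\sigma_i^2$, the support check, and the size divergence under all three partitions rather than only the $|\theta_0|$ case.
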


\begin{proof}
\label{proof:simple publication bias}Let $\Pi$ be the partition
of $\mathcal{P}$ where $p,q\in\pi$ if and only if they share the
same $\theta_{0}$, and let $||[p]||=|\theta_{0}|$. We are dealing
with products of densities of the form (\ref{eq:selection for significance model}),
that is,
\[
p(x)=\prod_{i=1}^{N}\Phi\left(\frac{\theta_{0}-c_{\alpha}}{\sqrt{\sigma_{i}+\tau}}\right)^{-1}\phi(x_{i};\theta_{0},\sqrt{\sigma_{i}^{2}+\tau^{2}}),
\]
where $\sigma_{i}$ are known parameters. From Lemma \ref{lem:generalized shifted exponential},
$p_{n}$ converges to a product of truncated exponentials when $\theta_{n}=-n$
and $\tau_{n}^{2}=n$. Since $||[p_{n}]||=n$, the three conditions
of Theorem \ref{thm:Gleser--Hwang} are satisfied. The proofs for
$||[p]||=\tau$ and $||[p]||=\sqrt{\tau^{2}+\theta_{0}^{2}}$ are
similar and omitted.
\end{proof}
Proving the analogue of Theorem \ref{thm:simple publication bias}
for Hedges' publication is only somewhat more involved. We will use
the mixture representation of (\ref{eq:mixture model formulation})
and a lemma generalizing Theorem \ref{thm:Gleser--Hwang} to a certain
kind of mixtures. 

Let $f^{1},f^{2},\ldots,f^{K}$ be a sequence of densities, $\pi=(\pi^{1},\pi^{2},\ldots,\pi^{K})$
be a probability vector, and $p=\sum_{k\leq K}\pi^{k}f^{k}$ be a mixture distribution. We will assume that the size of $[p]$ equals the size of any of its mixture components $[f^{k}]$ for some size $\left\Vert \cdot\right\Vert $,
i.e., $\left\Vert [p]\right\Vert =\left\Vert [f^{k}]\right\Vert $
for all $k$. Why we do this will be clear in the proof of Theorem
\ref{thm:general publication bias}, but think of it this way: If
all of $p$s mixture components have the same mean, the mean of $p$
equals the mean of any $f^{k}$. 
\begin{lem}
\label{prop:Mixture model corollary} Let $\mathcal{P}$ be a class
of $K$-ary mixture distributions and $||[p]||$ be as assumed above. Assume there is a sequence $p_{n}=\sum_{k\leq K}\pi_{n}^{k}f_{n}^{k}$
and a subset $K'$ such that 
\begin{enumerate}
\item For all $k\in K'$, there is a density $f^{k\star}$ such that $f_{n}^{k}$
converges to $f^{k\star}$ pointwise.
\item For all mixtures $p$, $\supp p\supseteq\supp f^{k\star}$ for all
$k\in K'$.
\item For all $k\in K'$, the size of $[f_{n}^{k}]$ goes to infinity, $\left\Vert [f_{n}^{k}]\right\Vert \to\infty$.
\item The density concentrates on the components indexed by $K'$, $\lim_{n\to\infty}\sum_{k\in K'}\pi_{n}^{k}=1$.
\end{enumerate}
Then every confidence set with level $\alpha>0$ has infinite diameter
with positive probability.
\end{lem}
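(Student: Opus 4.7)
The plan is to mirror the proof of Theorem \ref{thm:Gleser--Hwang}, inserting an intermediate step that identifies a total-variation limit $p^{\star}$ of the mixtures $p_{n}$. Conditions (i)--(iv) are tailored precisely so that such a limit exists on a suitable subsequence even though the components $f_{n}^{k}$ for $k\notin K'$ are uncontrolled.

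First I would pass to a subsequence along which every coordinate $\pi_{n}^{k}$ of the weight vector converges, say $\pi_{n}^{k}\to\pi_{\infty}^{k}$; this is possible by compactness of the simplex. Assumption (iv) forces $\pi_{\infty}^{k}=0$ for $k\notin K'$ and $\sum_{k\in K'}\pi_{\infty}^{k}=1$, so the candidate limit $p^{\star}:=\sum_{k\in K'}\pi_{\infty}^{k}f^{k\star}$ is a bona fide density. To upgrade the pointwise convergence $f_{n}^{k}\to f^{k\star}$ for $k\in K'$ from assumption (i) to $L^{1}$ convergence I would invoke Scheff\'e's theorem. The triangle inequality
\[
\|p_{n}-p^{\star}\|_{1}\le\sum_{k\in K'}\bigl(|\pi_{n}^{k}-\pi_{\infty}^{k}|+\pi_{\infty}^{k}\|f_{n}^{k}-f^{k\star}\|_{1}\bigr)+\sum_{k\notin K'}\pi_{n}^{k}
\]
then shows $p_{n}\to p^{\star}$ in $L^{1}$, so $P_{n}\to P^{\star}$ in total variation.

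Next I would carry out the Gleser--Hwang argument. Fix $M>0$. Because $\|[p_{n}]\|=\|[f_{n}^{k}]\|$ by hypothesis and assumption (iii) gives $\|[f_{n}^{k}]\|\to\infty$ for any $k\in K'$, there is $N_{M}$ with $\|[p_{n}]\|>M$ whenever $n\ge N_{M}$. By the definition of the diameter in \eqref{eq:diameter}, on the event $\{D<M\}$ every part of size at least $M$ must lie in the rejection set, so $\{D<M\}\subseteq R([p_{n}])$. Combining $P_{n}(R([p_{n}]))\le\alpha$ with the total-variation convergence just established yields $P^{\star}(D<M)\le\alpha$, and letting $M\to\infty$ gives $P^{\star}(D=\infty)\ge1-\alpha>0$.

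Finally, assumption (ii) transfers the conclusion to every $P\in\mathcal{P}$: since $\supp p\supseteq\supp f^{k\star}$ for each $k\in K'$ and $p^{\star}$ is a convex combination of those densities, $\supp p\supseteq\supp p^{\star}$, so any event of positive $P^{\star}$-measure also has positive $P$-measure. The main obstacle is the passage to $p^{\star}$: the components indexed by $K\setminus K'$ need not converge in any sense, and only the pairing of (i) (pointwise limits on $K'$) with (iv) (vanishing total mass off $K'$) keeps their contribution negligible.
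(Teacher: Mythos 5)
Your proof is correct, but it follows a genuinely different route from the paper's. The paper proves the lemma by feeding the mixture sequence straight into Theorem \ref{thm:Gleser--Hwang}: it asserts that (i) and (iv) yield pointwise convergence of $p_{n}$ to $p^{\star}=\sum_{k\in K'}\pi^{k\star}f^{k\star}$, that (ii) and (iv) yield the support condition, and that $\left\Vert [p_{n}]\right\Vert =\left\Vert [f_{n}^{k}]\right\Vert \to\infty$ yields the size condition. You instead extract a subsequence along which the weight vector converges, define $p^{\star}$ from the limiting weights, upgrade the componentwise pointwise convergence on $K'$ to $L^{1}$ convergence via Scheff\'e, and then rerun the Gleser--Hwang argument with total-variation convergence in place of pointwise convergence plus the sandwich lemma (Lemma \ref{lem:Dominated covergence theorem}); your final transfer to arbitrary $P\in\mathcal{P}$ via the support inclusion matches the paper's. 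Your detour buys something real: pointwise convergence of $p_{n}$ does not literally follow from (i) and (iv) as stated, since (iv) controls only the sum of the weights over $K'$ (not the individual $\pi_{n}^{k}$), and for $k\notin K'$ the term $\pi_{n}^{k}f_{n}^{k}(x)$ need not vanish pointwise if $f_{n}^{k}(x)$ grows faster than $\pi_{n}^{k}$ decays; your $L^{1}$ bound $\sum_{k\notin K'}\pi_{n}^{k}\to0$ disposes of those uncontrolled components with no further assumptions, so your argument proves the lemma exactly as stated (in the paper's application the weights off $K'$ can be held fixed at values tending to the right limits, so the gap is harmless there). Only minor housekeeping is worth adding: note that the subsequence preserves (iii) and the level-$\alpha$ inequality, and that the limiting weights sum to one over $K'$ so $p^{\star}$ is indeed a density --- all immediate.
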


\begin{proof}
We employ Theorem \ref{thm:Gleser--Hwang}. By (i) and (iv), $p_{n}$
converges pointwise to the density $$\sum_{k\in K'}\pi_{n}^{k\star}f_{n}^{k\star}=p^{\star}.$$
That $\supp p\supseteq p^{\star}$ follows from (ii) and (iv). Finally,
from the assumption that$\left\Vert [p_{n}]\right\Vert =\left\Vert [f_{n}^{k}]\right\Vert $,
we get that $\left\Vert [p_{n}]\right\Vert \to\infty$ too. 
\end{proof}
\begin{thm}
\label{thm:general publication bias} Assume we have $N$ independent samples from the publication bias model (\ref{eq:selection for significance model}), where the selection probability $\rho$ is unknown and $\alpha$ is known. Then any confidence set for $\theta_{0},\tau$, or $(\theta_{0},\tau)$ will have infinite diameter with positive probability.
\end{thm}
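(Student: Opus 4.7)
The plan is to lift the proof of Theorem \ref{thm:simple publication bias} to the mixture representation (\ref{eq:mixture model formulation}) of Hedges' model and to appeal to Lemma \ref{prop:Mixture model corollary} instead of directly invoking Theorem \ref{thm:Gleser--Hwang}. I would use the same witnessing sequence as in the basic model: fix each $\sigma_{i}$, set $\theta_{n}=-n$ and $\tau_{n}^{2}=n$, and, since $\rho$ is unknown, leave it free to be chosen (for concreteness, the constant vector $\rho_{n}^{k}=1$ works).

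Each mixture component $f_{n}^{k}$ in (\ref{eq:mixture model formulation}) is a normal with underlying mean $\theta_{n}=-n$ and variance $\tau_{n}^{2}+\sigma_{i}^{2}=n+\sigma_{i}^{2}$ truncated to the fixed interval $I_{k}$ determined by $\alpha$, so Lemma \ref{lem:generalized shifted exponential} (with $c=\sigma_{i}^{2}$) gives pointwise convergence of every $f_{n}^{k}$ to the exponential truncated to $I_{k}$ and verifies hypothesis (i) of Lemma \ref{prop:Mixture model corollary} for every $k$. For the concentration condition (iv), I would take $K'=\{K\}$, where $I_{K}$ is the left-most $x$-interval (recall $u=\Phi(-x/\sigma)$, so $u$ near $1$ corresponds to $x$ very negative). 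As $\theta_{n}\to-\infty$ with $\tau_{n}^{2}=n$, the underlying normal density $\phi(\,\cdot\,;-n,\sqrt{n+\sigma_{i}^{2}})$ places essentially all of its mass in $I_{K}$: $\int_{I_{k}}\phi_{n}\to0$ for $k<K$ and $\int_{I_{K}}\phi_{n}\to1$, which forces $\pi_{n}^{K}\to1$ for any bounded choice of $\rho_{n}$ with $\rho_{n}^{1}=1$. The size condition (iii) and the built-in hypothesis $\|[p_{n}]\|=\|[f_{n}^{k}]\|$ are automatic, because every component of $p_{n}$ shares the mixture's underlying $(\theta_{n},\tau_{n})$, so $\|[f_{n}^{k}]\|$ equals $n$, $\sqrt{n}$, or $\sqrt{n^{2}+n}$ under the three partitions of Table \ref{tab:Possible partitions}, and diverges in each case.

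The only condition requiring thought is (ii), which demands $\supp p\supseteq\supp f^{K\star}=I_{K}$ for every $p\in\mathcal{P}$. Under the convention $\rho^{K}>0$, this is automatic since a mixture with positive weight on its $K$-th component already covers $I_{K}$. I expect this support bookkeeping to be the only subtle step; if one wishes to allow parameters with $\rho^{K}=0$, the alternative is to enlarge $K'$ and to tune $\rho_{n}$ so that the weights on each retained component converge to strictly positive limits, which is straightforward because the ratios $\int_{I_{k}}\phi_{n}/\int_{I_{K}}\phi_{n}$ have explicit Gaussian-tail asymptotics, but this adds arithmetic without changing the logic. Once (i)--(iv) are in hand, Lemma \ref{prop:Mixture model corollary} delivers the conclusion for each of the three partitions in Table \ref{tab:Possible partitions}, and the proof is complete.
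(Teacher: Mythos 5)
Your overall strategy (the witnessing sequence $\theta_n=-n$, $\tau_n^2=n$ fed into Lemma \ref{prop:Mixture model corollary}) is the paper's, but your choice $K'=\{K\}$ breaks hypothesis (i), and this is a genuine gap rather than bookkeeping. The $K$-th component is the normal truncated to the \emph{left-unbounded} interval (the one with $u$ near $1$, i.e.\ $x/\sigma_i<\Phi^{-1}(1-\alpha_{K-1})$), and Lemma \ref{lem:generalized shifted exponential} does not cover it: the lemma is stated for truncation to $[a,b)$ with $a$ finite (only $b=\infty$ is allowed), and its limit $\exp(-x)/[\exp(-a)-\exp(-b)]$ is meaningless when $a=-\infty$. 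In fact, with $\theta_n=-n$ and variance $n+\sigma_i^2$, at every fixed $x$ one has $\phi(x;-n,\sqrt{n+\sigma_i^2})=(2\pi(n+\sigma_i^2))^{-1/2}\exp\left(-(x+n)^2/(2(n+\sigma_i^2))\right)\to 0$ while the truncation normalizer tends to $1$, so $f_n^{K}\to 0$ pointwise: the mass escapes to $-\infty$ and there is no limiting density $f^{K\star}$. With $K'=\{K\}$ and $\rho_n\equiv 1$ your $p_n$ therefore has no pointwise density limit $p^{\star}$ with $\int p^{\star}d\mu=1$, and the Gleser--Hwang sandwich argument cannot be run; your assertion that Lemma \ref{lem:generalized shifted exponential} gives truncated-exponential limits ``for every $k$'' is exactly where this failure is hidden.

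The paper does the opposite: it takes $K'=\{1,\ldots,K-1\}$, the components truncated to intervals with finite left endpoints, for which the lemma genuinely applies, and then exploits the fact that $\rho$ is \emph{unknown} to tune $\rho_n$ along the sequence so that the weight on the excluded component vanishes, e.g.\ fixing $\pi_n^{k}=1/(K-1)$ for $k<K$ so that $\pi_n^{K}\to 0$ and (iv) holds. This is precisely where the hypothesis that $\rho$ is unknown enters; your construction never uses it, and indeed $\rho_n\equiv 1$ actively concentrates the mass on the bad component. Your fallback remark about ``enlarging $K'$'' does not repair this, since any $K'$ containing $K$ inherits the same failure of (i); the repair is to exclude $K$ and send $\rho_n^{K}\to 0$ (or set it to $0$) fast enough to offset the fact that the untruncated normal piles its mass into that interval. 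Your treatment of (iii) is fine, and the support condition (ii) — which is then needed for the intervals $k\le K-1$, i.e.\ $\rho^{k}>0$ for $k<K$, not for the left-unbounded interval — goes through as in the paper.
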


\begin{proof}
\label{proof:general publication bias} Let $n=1$ and consider confidence sets for $\theta_{0}$. Let $\Pi$ be the partition of $\mathcal{P}$ where $p,q\in\pi$ if and only if they share the same $\theta_{0}$, and let $||[p]||=|\theta_{0}|$. Then $||f^{k}||=|\theta_{0}|$ from the mixture representation (\ref{eq:mixture model formulation}). Using Lemma \ref{lem:generalized shifted exponential}, we see that $f^{k},k<K$ converges pointwise to truncated exponentials when $\theta_{0}=-n$ and $\tau_n^{2} = n$, so that (i), (iii) of Proposition \ref{prop:Mixture model corollary} are satisfied with the set $K'=\{1,2,\ldots, K-1\}$. Moreover, since we assume that $\rho$ is decreasing, (ii) is satisfied as well. The mixture probabilities for $k \neq K$ can be fixed at e.g. $\pi=1/(K-1)$, and (iv) is satisfied as well. The proofs for $||[p]||=\tau$ and $||[p]||=\sqrt{\tau^{2}+\theta_{0}^{2}}$ are similar and omitted. 

When $N>1$, expand the expression $\prod_{i=1}^{N}\sum_{k<K}\pi_{i}^{k}(\sigma_{i})f_{i}^{k}(\sigma_{i}),$ and use the same reasoning as in the first part of this proof.
\end{proof}

\section{Remarks}

Well-behaved confidence sets for Hedges publication bias model do
not exist, but well-behaved credibility sets do. Bayesian estimation
of Hedges' model can be made routine, as it is easy to find uncontroversial
priors for $\theta_{0}$ and $\tau$. In practical meta-analyses we
know that $\theta_{0}$ cannot be large, and is likely to be close
to $0$. Moreover, since it is common effects in meta-analyses to
be interpreted as the aggregation of many small effects, the central
limit theorem justifies using a normal prior. As we want to remove
prior mass from negative $\theta_{0}$s of large magnitude, $N(0,1)$
is a decent standard prior. Similarly, a half-normal is a reasonable
prior for the heterogeneity parameter $\tau$. \citet{moss2019modelling}
employed these priors on several examples. 

\section*{Appendix}

The following sandwich convergence theorem is used in the proof of
the Gleser--Hwang theorem. 
\begin{lem}[{\citet[Exercise 16.4(a)]{billingsley1995probability}}]
\label{lem:Dominated covergence theorem} Suppose the functions $a_{n},b_{n},f_{n}$
converge pointwise to $a,b,f$ and $a_{n}\leq f_{n}\leq b_{n}$ for all $n$.
If $\int a_{n}d\mu\to\int ad\mu$ and $\int b_{n}d\mu\to\int bd\mu$,
then $\int f_{n}d\mu\to\int fd\mu$ for any measure $\mu$.
\end{lem}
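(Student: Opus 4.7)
The plan is to reduce this generalized dominated convergence statement to two applications of Fatou's lemma, applied to the non-negative sequences $g_n := f_n - a_n$ and $h_n := b_n - f_n$. The hypothesis $a_n \leq f_n \leq b_n$ ensures $g_n, h_n \geq 0$, and pointwise convergence of $a_n, b_n, f_n$ gives $g_n \to f - a$ and $h_n \to b - f$ pointwise.

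Applying Fatou to $g_n$ yields $\int (f - a)\,d\mu \leq \liminf_{n} \int (f_n - a_n)\,d\mu$. Since $\int a_n\,d\mu \to \int a\,d\mu$ is a convergent sequence of real numbers, $\int a\,d\mu$ is finite and may be subtracted from both sides, giving
\[
\int f\,d\mu \;\leq\; \liminf_{n} \int f_n\,d\mu.
\]
Symmetrically, Fatou applied to $h_n$ gives $\int (b - f)\,d\mu \leq \liminf_{n} \int (b_n - f_n)\,d\mu$, and finiteness of $\int b\,d\mu$ lets us rearrange this as $\limsup_{n} \int f_n\,d\mu \leq \int f\,d\mu$. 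Combining the two inequalities sandwiches $\liminf$ and $\limsup$ between equal values, yielding $\int f_n\,d\mu \to \int f\,d\mu$.

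There is no real obstacle here — the only subtlety is making sure the bounding limits $\int a\,d\mu$ and $\int b\,d\mu$ are finite so that linearity may be used to split the Fatou inequalities. This is automatic from the assumption that $\int a_n\,d\mu$ and $\int b_n\,d\mu$ converge (as real numbers) to $\int a\,d\mu$ and $\int b\,d\mu$ respectively. The measurability of $f$ (needed to form $\int f\,d\mu$) follows from $f$ being the pointwise limit of the measurable $f_n$.
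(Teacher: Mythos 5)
Your proof is correct. The paper offers no proof of this lemma at all---it is quoted directly from Billingsley, Exercise 16.4(a)---and your argument, applying Fatou's lemma to the non-negative sequences $f_{n}-a_{n}$ and $b_{n}-f_{n}$ and then using convergence (hence finiteness) of $\int a_{n}\,d\mu$ and $\int b_{n}\,d\mu$ to split the inequalities, is the standard solution of that exercise; the only step worth making explicit is that $a\leq f\leq b$ with $a,b$ integrable gives integrability of $f$, so the subtractions in the two Fatou inequalities are legitimate.
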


The proof of Theorem \ref{thm:Gleser--Hwang} closely follows the
proof of \citet[Theorem 1]{gleser1987nonexistence}.
\begin{proof}[Proof of Theorem \ref{thm:Gleser--Hwang}]
\label{proof:Gleser--Hwang}We can assume without loss of generality
that $\left\Vert [p_{n}]\right\Vert \geq n$, as we can choose a suitable sub-sequence
if we have to. By definition of the diameter $D$ (\ref{eq:diameter})
we see that
\[
\{D\geq n\}=\{\omega\in\Omega\mid\textrm{there is a }\pi\textrm{ such that}\left\Vert \pi\right\Vert \geq n\textrm{ and }\omega\in R^{c}(\pi)\}.
\]
It follows that, if $||[p_{n}]||\geq n$, then $R^{c}([p_{n}])\subseteq\{D\geq n\}$.
Since we assume that $\left\Vert [p_{n}]\right\Vert \geq n$ and 
\[
1-\alpha\leq P_{n}(R^{c}([p_{n}]))=\int_{R^{c}([p_{n}])}p_{n}d\mu
\]
by definition of a confidence set, we have that
\begin{equation}
0<1-\alpha\leq\int_{R^{c}([p_{n}])}p_{n}d\mu\leq\int_{D\geq n}p_{n}d\mu\label{eq:inequality with R^c}
\end{equation}
for all $n$. Since $p_{n}$ and $p^{\star}$ are densities,
\[
\lim_{n\to\infty}\int p_{n}d\mu=1=\int p^{\star}d\mu=\int\lim_{n\to\infty}p_{n}d\mu.
\]
This allows us to use Lemma \ref{lem:Dominated covergence theorem}
with $a_{n}=0$, $b_{n}=p_{n}$, and $f_{n}=1_{D\geq n}p_{n}$ and
conclude that

\begin{equation}
\int_{D\geq n}p_{n}d\mu\to\int_{D=\infty}p^{\star}d\mu.\label{eq:limit of pn}
\end{equation}
Combining equations (\ref{eq:inequality with R^c}) and (\ref{eq:limit of pn}),
we get
\[
0<1-\alpha\leq\int_{D=\infty}p^{\star}d\mu.
\]
Let $P\in\mathcal{P}$ be arbitrary, $p$ be its density, and consider
\[
P(D=\infty)=\int_{D=\infty}pd\mu\geq\int_{D=\infty\cap\supp p^{\star}}\left(\frac{p}{p^{\star}}\right)p^{\star}d\mu.
\]
Since $\int_{D=\infty}p^{\star}d\mu>0$ and $p/p^{\star}>0$ on $\supp p^{\star}$
(since $\supp p\supseteq\supp p^{\star}$ by assumption), we see that
$\int_{D=\infty\cap\supp p^{\star}}(p/p^{\star})p^{\star}d\mu>0$
too. It follows that $P(D=\infty)>0$, and, since $P$ is arbitrary,
$D$ has infinite diameter with positive probability.
\end{proof}
Now we prove Lemma \ref{lem:generalized shifted exponential}.
\begin{proof}[Proof of Lemma (\ref{lem:generalized shifted exponential})]
\label{proof:generalized shifted exponential}Let $n>-c$, so
that $\sigma_{n}^{2}>0$. Recall the well-known formula for the normal
truncated to $[a,b]$, and substitute $\theta_n = -n$ and $\sigma^2_n = n + c$,
\begin{eqnarray}
f_{n}(x) & = & \frac{1}{\Phi\left(\frac{b-\theta_{n}}{\sigma_{n}}\right)-\Phi\left(\frac{a-\theta_{n}}{\sigma_{n}}\right)}\phi(x;\theta_{n},\sigma_{n})1[a,b](x),\label{eq:truncated}\\
 & = & \frac{\phi(x;-n,(n+c)^{1/2})1[a,b](x)}{\Phi[-(a+n)(n+c)^{-1/2}]-\Phi[-(b+n)(n+c)^{-1/2}]}.\nonumber 
\end{eqnarray}
The normal density part equals
\[
\phi(x;-n,(n+c)^{1/2})=(2\pi)^{-1/2}(n+c)^{-1/2}\exp\left(-\frac{x^{2}+2nx+n^{2}}{2(n+c)}\right).
\]
When $n$ is large compared to $x$, the term $x^{2}/2(n+c)$ is negligible,
hence
\begin{eqnarray*}
\phi(x;-n,(n+c)^{1/2}) & \approx & (2\pi)^{-1/2}(n+c)^{-1/2}\exp(-n^{2}/2(n+c))\exp(-x),\\
 & = & (n+c)^{-1/2}\phi(n/(n+c)^{1/2})\exp(-x).
\end{eqnarray*}
From Equation 5 of \citet{borjesson1979simple}, we know that $\Phi(-x)\approx\phi(x)/x$
as $x$ grows. Then
\begin{equation*}
\Phi[-(a+n)(n+c)^{-1/2}] \approx \frac{(n+c)^{1/2}}{n+a}\phi[-(a+n)(n+c)^{-1/2}],
\end{equation*}
and using the same reasoning as above, we find that $\phi[-(a+n)(n+c)^{-1/2}]\approx\phi((n+c)^{1/2})\exp(-a)$
as $n$ increase. Therefore,
\[
\Phi[-(a+n)(n+c)^{-1/2}]\to\frac{(n+c)^{1/2}\phi((n+c)^{1/2})\exp(-a)}{a+n}.
\]
Since this reasoning applies to $b$ as well, we get that $f$ approaches
\begin{eqnarray*}
 &  & \frac{(n+c)^{-1/2}\phi(n/(n+c)^{1/2})\exp(-x)}{\Phi[-(a+n)n^{-1/2}]-\Phi[-(b+n)n^{-1/2}]},\\
 & \approx & \frac{(n+c)^{-1/2}\phi(n/(n+c)^{1/2})\exp(-x)}{(n+c)^{1/2}\phi((n+c)^{1/2})\left[\frac{\exp(-a)}{a+n}-\frac{\exp(-b)}{b+n}\right]},\\
 & = & \frac{\phi(-n^{2}/2(n+c))\exp(-x)}{\phi((n+c)^{1/2})\left[\frac{\exp(-a)}{a+n}-\frac{\exp(-b)}{b+n}\right]},\\
 & \approx & \exp(-x)n^{-1}\left[\frac{\exp(-a)}{a+n}-\frac{\exp(-b)}{b+n}\right]^{-1},\\
 & \to & \exp(-x)\left[\exp(-a)-\exp(-b)\right]^{-1}.
\end{eqnarray*}
Here the third line follows from $\phi(n/(n+c)^{1/2})/\phi((n+c)^{1/2})\to1$.
\end{proof}
These are the formula for the mixture probabilities
$\pi_{i}$, see \eqref{eq:mixture model formulation}. Let $c_{k}=\Phi^{-1}(1-\alpha_{k})$ and define 
\[
c=\sum_{k=1}^{K}\rho^{k}[\Phi(c_{k-1};\theta_{0},(\tau^{2}+\sigma_{i}^{2})^{1/2})-\Phi(c_{k};\theta_{0},(\tau^{2}+\sigma_{i}^{2})^{1/2})].
\]
Then \label{eq:pi_i formula}
\[
\pi^{k}=c^{-1}\rho^{k}[\Phi(c_{k-1};\theta_{0},(\tau^{2}+\sigma_{i}^{2})^{1/2})-\Phi(c_{k};\theta_{0},(\tau^{2}+\sigma_{i}^{2})^{1/2})].
\]

\bibliographystyle{apacite}
\bibliography{infiniteci}

\end{document}